\newtheorem{thm}{Theorem}[section]
\newtheorem{lem}[thm]{Lemma}
\newtheorem{prop}[thm]{Proposition}
\newtheorem{definition}[thm]{Definition}
\theoremstyle{remark}
\newtheorem{rem}{Remark}[section]
\newtheorem*{rem*}{Remark}
\newcommand{\comment}[1]{}
\begin{document}

\title{Existence of a Unique group of finite order}
\author{ Sumit Kumar Upadhyay$^1$ and Shiv Datt Kumar$^2$\vspace{.4cm}\\
DEPARTMENT OF MATHEMATICS\\
MOTILAL NEHRU NATIONAL INSTITUTE OF TECHNOLOGY \\
ALLAHABAD, U. P., INDIA}
\thanks{ $^1$upadhyaysumit365@gmail.com, $^2$sdt@mnnit.ac.in}

\begin{abstract}
Let $n$ be a positive integer. Then cyclic group $Z_n$ of order $n$ is the only group of order $n$  iff g.c.d. $(n,\varphi(n))=1$, where $\varphi$ denotes the Euler-phi function. In this article we have given another proof of this result using the knowledge of semi direct product and induction.
\end{abstract}

\maketitle

\section{Introduction}
One of the main problem in group theory is to classify groups upto isomorphisms.  For example one may ask: How many non-isomorphic groups of order $n$ are there? and what are they ?.
The Fundamental theorem of Abelian groups  says that every finite Abelian group  is the direct product of cyclic  groups of prime power order, which  classifies the non-isomorphic Abelian groups of order $n$.  But there is no classification of non-isomorphic non-Abelian groups of order $n$.
For every natural $n$, there must be a cyclic group $\mathbb{Z}_n$ of order $n$. If $n$ is prime, then $\mathbb{Z}_n$ is the only group of order $n$.  Also if $n = pq$, where $p$ \& $q$ are distinct primes such that $p$ does not divides $(q-1)$, then by the Sylow theorem, $\mathbb{Z}_n$ is the only group of order $n$.  Thus there is a natural question: what are conditions for existence of  a unique group of order $n$.
Dieter Jungnickel proved (\cite{DJ}) that if greatest common divisor (g.c.d.) $(n,\varphi(n))=  1$, then  $Z_n$ is the only group of order $n$.
In this article we have given another proof using the knowledge of the semi direct product and induction.

\begin{definition}
  Let $H$ and $K$ be two groups and let $\phi$ be a homomorphism from $K$ to $Aut(H)$.
            Let $G=\{(h,k)\vert  h \in H \& k \in K \}$.
           Define multiplication on $G$ as
                  $(h_1,k_1) (h_2,k_2)=(h_1\phi(k_1)(h_2),k_1k_2)$.
     This multiplication makes $G$  a group of order $\mid G \mid = \mid H \mid \mid K \mid$, where $(1,1) $ is the identity of $G$ and $ (h,k)^{-1}=(\phi(k^{-1})(h^{-1}),k^{-1}) $ is the inverse of $(h,k)$.
            The group $G$ is called the semi-direct product of $H$ and $K$ with respect to $\phi$.
\end{definition}
\begin{rem}
   If $\phi$ is a trivial homomorphism from $K$ to $Aut(H)$, then $\phi(k_1) = I$ (identity),  $\forall k_1 \in K$. Therefore $(h_1,k_1) (h_2,k_2)=(h_1\phi(k_1)(h_2),k_1k_2) = (h_1 h_2, k_1k_2)$. Hence  semi direct product of $H$ and $K$ is the direct product of $H$ and $K$.
\end{rem}

\begin{lem}
For a given natural number $n$, if g.c.d. $(n,\varphi(n))= 1$, then $n$ must be square free, where $\varphi(n)$ is the Euler phi function.
\end{lem}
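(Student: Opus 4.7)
The plan is to argue by contrapositive: assume $n$ is not square-free and exhibit a prime that divides both $n$ and $\varphi(n)$, thereby forcing $\gcd(n,\varphi(n))>1$.

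So suppose some prime $p$ satisfies $p^{2}\mid n$. Then I can factor $n=p^{k}m$ with $k\ge 2$ and $\gcd(p,m)=1$. Using multiplicativity of $\varphi$ on coprime factors together with the standard formula $\varphi(p^{k})=p^{k-1}(p-1)$, I get
\[
\varphi(n)=\varphi(p^{k})\,\varphi(m)=p^{k-1}(p-1)\varphi(m).
\]
Because $k\ge 2$, the factor $p^{k-1}$ is divisible by $p$, so $p\mid\varphi(n)$. Combined with $p\mid n$, this gives $p\mid\gcd(n,\varphi(n))$, contradicting the hypothesis $\gcd(n,\varphi(n))=1$.

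The only genuine ingredients are the formula $\varphi(p^{k})=p^{k-1}(p-1)$ and multiplicativity of $\varphi$ on coprime arguments; there is no real obstacle, and the contrapositive framing makes the implication essentially a one-line observation once the factorization $n=p^{k}m$ is written down. I would present the argument in exactly this contrapositive form for clarity.
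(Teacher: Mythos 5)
Your argument is correct and follows essentially the same route as the paper: factor out the full power of $p$ from $n$, apply multiplicativity of $\varphi$ together with $\varphi(p^{k})=p^{k-1}(p-1)$, and observe that $p$ then divides both $n$ and $\varphi(n)$. If anything, your version is slightly more careful than the paper's, which writes the coprimality condition on $m$ imprecisely and asserts that the g.c.d.\ \emph{equals} $p^{\alpha-1}$ when all that is needed (and all that follows) is that it exceeds $1$.
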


\begin{proof}
Suppose contrary that  $n = p^{\alpha}m$, where $\alpha>1$ and $p^{\alpha}$  does not divide $ m $.           Then $\varphi(n)=(p^{\alpha}-p^{\alpha-1})\varphi(m)$.
This shows that $p^{\alpha-1}\mid \varphi(n)$ and $p^{\alpha-1}\mid n$.
     Hence $(n,\varphi(n)) = p^{\alpha-1}$, which is a contradiction.
 Hence $n$ is  square free.
\end{proof}

\begin{prop}
If cyclic group  $Z_n$ is the only group of order $n$, then $n$ must be square free.
\end{prop}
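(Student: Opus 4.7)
The natural approach is to prove the contrapositive: if $n$ is not square free, then there exists a non-cyclic group of order $n$, so $Z_n$ cannot be the only group of that order. The plan is to write $n = p^{\alpha} m$ with $\alpha \geq 2$, $p$ prime, and $\gcd(p,m) = 1$, and then exhibit an explicit abelian but non-cyclic group of order $n$.

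First I would form the group
\[
G \;=\; \mathbb{Z}_{p^{\alpha-1}} \times \mathbb{Z}_p \times \mathbb{Z}_m,
\]
which, by the preceding remark, is just an iterated semi-direct product with trivial action, hence a genuine group. Its order is $p^{\alpha-1}\cdot p \cdot m = p^{\alpha}m = n$, so it does have order $n$.

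Next I would verify that $G$ is not cyclic. The cleanest route is to compute its exponent: every element of $\mathbb{Z}_{p^{\alpha-1}} \times \mathbb{Z}_p$ has order dividing $\mathrm{lcm}(p^{\alpha-1},p) = p^{\alpha-1}$, so every element of $G$ has order dividing $p^{\alpha-1}m < p^{\alpha}m = |G|$. A cyclic group of order $n$ must contain an element of order $n$, so $G$ cannot be cyclic, and therefore $G \not\cong Z_n$.

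Combining these two facts yields two non-isomorphic groups of order $n$, contradicting the hypothesis that $Z_n$ is the only one; hence $n$ must be square free. I do not anticipate any serious obstacle here: the argument is essentially a single construction plus an exponent count, and the only point requiring mild care is ensuring that the construction is available for every prime power $p^{\alpha}$ with $\alpha \geq 2$, which is immediate from the definition of the direct product as a special case of the semi-direct product recorded in the remark above.
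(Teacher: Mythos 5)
Your proof is correct, and it takes a slightly different (and more elementary) route than the paper. The paper invokes the Fundamental Theorem of Abelian Groups in its counting form: the number of abelian groups of order $n = p_1^{n_1}\cdots p_r^{n_r}$ is $p(n_1)\cdots p(n_r)$, so uniqueness forces every $n_i = 1$. You instead argue by explicit construction: writing $n = p^{\alpha}m$ with $\alpha \ge 2$ and $\gcd(p,m)=1$, you exhibit $\mathbb{Z}_{p^{\alpha-1}} \times \mathbb{Z}_p \times \mathbb{Z}_m$ and show it is not cyclic because its exponent divides $p^{\alpha-1}m < n$. Both arguments hinge on the same underlying fact --- a square factor in $n$ produces a second abelian group of order $n$ --- but yours needs only the existence of direct products and the observation that a cyclic group of order $n$ contains an element of order $n$, whereas the paper's relies on the full enumeration of abelian groups up to isomorphism. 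Your version is therefore self-contained and arguably preferable in a paper whose stated aim is an elementary proof; the paper's version is shorter if one is willing to quote the classification. One cosmetic remark: you do not need the semi-direct product machinery to justify the existence of $\mathbb{Z}_{p^{\alpha-1}} \times \mathbb{Z}_p \times \mathbb{Z}_m$; the ordinary direct product suffices.
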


\begin{proof}
By the Fundamental theorem of Abelian groups, we know that if $n = p_1^{n_1}\ldots p_r^{n_r}$, where $ p_1, p_2,\ldots, p_r $ are distinct primes,
the number of non-isomorphic Abelian groups of order $n$ is $p(n_1)\ldots p(n_r)$, where $p(n_i)$ denotes number of partitions of $n_i$. Since $Z_n$ is unique group of order $n$, therefore $n$ must be square free.
\end{proof}

\begin{rem}
Let $G$ be a group of order $n$, and $p$ be a prime dividing $n$ and $P$ be a Sylow $p$-subgroup. If $\mid P \mid = p$, then every non identity element of $P$
has order $p$ and every element of $G$ of order $p$ lies in some conjugate of $P$. By the Lagrange's theorem distinct conjugates of $P$ intersect in the identity,
hence in this case the number of elements of $G$ of order $p$ is $n_p(p-1)$, where $n_p$ denotes number of distinct conjugates of $P$.

        Suppose Sylow $p$-subgroups for different primes $p$ have prime order and we assume none of these are normal. Then
the number of elements of prime order is greater than $ \mid G \mid $. This contradiction  would show that at least
 one of the $n_p's$ must be $1$ (i.e., some Sylow subgroup is normal in $G$).

     For example, suppose $\mid G \mid = 1365 =3.5.7.13$. If $G$ were simple, we must have $n_3 = 7, n_5 = 21, n_7 = 15$ and $n_{13} = 104$. Thus
\begin{center}
the number of elements of order $3$ =  $7.2    =  14$

the number of elements of order $5$ = $21.4   =  84$

the number of elements of order $7$ = $15.6   =  90$

the number of elements of order $13$ = $105.12  = 1260$

  Thus the number of elements of prime order = $1448  >  \mid G \mid$.

\end{center}

\end{rem}
\begin{prop}
Let $G$ be a group of order $p_1p_2\ldots p_s$, where $p_1,p_2,\ldots, p_s$ are distinct primes. Then $G$ is solvable.
\end{prop}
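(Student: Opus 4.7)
The natural approach is induction on $s$, the number of distinct prime factors. The base case $s=1$ is immediate, since a group of prime order is cyclic, hence abelian, hence solvable. For the inductive step I would assume the statement for all products of fewer than $s$ distinct primes and let $|G|=p_1p_2\cdots p_s$.

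The central step is to produce a normal Sylow subgroup of $G$. Because $|G|$ is square-free, every Sylow subgroup $P_i$ of $G$ has prime order $p_i$, so the setup of the preceding remark applies verbatim. I would argue by contradiction: if no Sylow subgroup were normal, then each $n_{p_i}>1$, and combining Sylow's congruence $n_{p_i}\equiv 1\pmod{p_i}$ with the divisibility $n_{p_i}\mid |G|/p_i$ forces $n_{p_i}\ge p_i+1$. Counting nonidentity elements of prime order $p_i$, each Sylow $p_i$-subgroup contributes $p_i-1$ such elements and distinct Sylow subgroups meet trivially (since they have prime order), giving exactly $n_{p_i}(p_i-1)$ elements of order $p_i$. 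Summing over all $i$ and using the lower bounds on $n_{p_i}$, I would show $\sum_{i=1}^{s} n_{p_i}(p_i-1) > |G|-1$, contradicting the fact that the identity plus all these prime-order elements lie in $G$. This will yield a Sylow $p_j$-subgroup $P$ that is normal in $G$.

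Once $P\triangleleft G$ is in hand, the rest is a short assembly: $|G/P|=p_1\cdots \widehat{p_j}\cdots p_s$ is a product of $s-1$ distinct primes, so $G/P$ is solvable by the inductive hypothesis, while $P$, being of prime order, is cyclic and thus solvable. Invoking the standard fact that an extension of a solvable group by a solvable group is solvable (i.e.\ if $N\triangleleft G$ with $N$ and $G/N$ solvable then so is $G$), I would conclude that $G$ itself is solvable.

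The main obstacle I foresee is making the counting estimate in the inductive step fully rigorous for arbitrary square-free $n$, rather than just checking it on an example as the remark does. In particular, one has to verify the inequality $\sum n_{p_i}(p_i-1) > |G|-1$ uniformly from the bounds $n_{p_i}\ge p_i+1$; a convenient way is to note that already a single noncyclic Sylow subgroup for the largest prime $p_s$ forces a very large number of elements of order $p_s$, and then handle the remaining small cases by hand. Everything else (Sylow counting, the quotient construction, closure of solvability under extensions) is routine.
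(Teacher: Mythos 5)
Your plan follows the same route as the paper: induction on $s$, a normal Sylow subgroup obtained by counting elements of prime order, and then the fact that an extension of a solvable group by a solvable group is solvable. The base case, the exact count $n_{p_i}(p_i-1)$ of elements of order $p_i$ (valid because Sylow subgroups of prime order meet trivially), and the final assembly are all fine. But the step you yourself flag as the main obstacle is a genuine gap, and the repair you sketch cannot work: the inequality $\sum_i n_{p_i}(p_i-1) > |G|-1$ is simply false for general square-free orders, even using the true arithmetic constraints on the $n_{p_i}$ rather than just $n_{p_i}\ge p_i+1$. Take $|G| = 210 = 2\cdot 3\cdot 5\cdot 7$ and suppose no Sylow subgroup is normal. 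Sylow's theorems only force $n_2\ge 3$, $n_3\ge 7$, $n_5\ge 6$, $n_7 = 15$, so the count of nonidentity elements of prime order need only be at least $3\cdot 1 + 7\cdot 2 + 6\cdot 4 + 15\cdot 6 = 131 < 209$. No contradiction results, so counting alone does not produce a normal Sylow subgroup here; your fallback (a noncyclic Sylow subgroup for the largest prime forces ``very many'' elements) also fails, since $n_7=15$ accounts for only $90$ of the $210$ elements. (The paper has exactly the same gap: its Remark asserts the counting contradiction in general but only verifies it on the single example $1365$.)

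To close the gap you need a different tool to produce a proper normal subgroup. The standard one is Burnside's normal $p$-complement theorem: let $p$ be the \emph{smallest} prime dividing $|G|$ and $P$ a Sylow $p$-subgroup, so $|P|=p$ and $\mathrm{Aut}(P)$ has order $p-1$. Since every prime divisor of $|N_G(P)/C_G(P)|$ divides both $|G|$ and $p-1$, minimality of $p$ gives $N_G(P)=C_G(P)$, hence $P$ is central in its normalizer and $G$ has a normal subgroup $N$ of order $|G|/p$. Then $N$ is solvable by induction and $G/N\cong P$ is cyclic, and your extension argument finishes the proof. This costs you an appeal to transfer theory, but some such input is unavoidable: the purely counting-based argument does not suffice for arbitrary square-free $n$.
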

\begin{proof}
Proof is by induction on $s$.
If $s=1$, then $G$ is a cyclic group. Then $G$ must be solvable.Thus result is true for $s=1$.
By induction hypothesis assume that result is true for $s = k$ i.e. every group of order $p_1p_2\ldots p_k$ is solvable. Now to prove the result for
$s = k+1$. Since every Sylow $p$-subgroups for different primes $p$ have prime order, so by the Remark 1.2, some Sylow subgroup is normal in $G$.
 Suppose $H$ is a normal Sylow $p_i$-subgroup of $G$ for some $i$.
Then $G/H$ is a group of order $p_1p_2 \ldots p_{i-1}p_{i+1} \ldots p_{k+1}$. By induction assumption $G/H$ is solvable group.
 Since $H$ is a cyclic subgroup of $G$, $H$ is solvable.
   Since $H$ and $G/H$ are solvable, therefore $G$ is also solvable.

Hence by induction hypothesis result is true for every  $s$.

\end{proof}

\begin{thm}
Let $n$ be a positive integer. Then cyclic group $Z_n$ of order $n$ is the only group of order $n$  iff g.c.d. $(n,\varphi(n))=1$, where $\varphi$ denotes the Euler-phi function.
\end{thm}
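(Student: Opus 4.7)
The plan is to prove the two implications separately, leaning on the preceding lemma, proposition, and remark.

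For the ``only if'' direction, assume $Z_n$ is the unique group of order $n$. Proposition 1.2 already forces $n$ to be squarefree, so write $n = p_1 p_2 \cdots p_s$. Suppose for contradiction that $\gcd(n,\varphi(n)) > 1$; then some $p_i \mid n$ divides $\varphi(n) = \prod_j (p_j - 1)$, hence $p_i \mid p_j - 1$ for some $j \neq i$. Since $\mathrm{Aut}(Z_{p_j}) \cong Z_{p_j - 1}$ is cyclic and its order is divisible by $p_i$, there is a nontrivial homomorphism $\phi : Z_{p_i} \to \mathrm{Aut}(Z_{p_j})$, and the associated semi-direct product $Z_{p_j} \rtimes_\phi Z_{p_i}$ is non-abelian of order $p_i p_j$. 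Taking its direct product with $Z_{n/(p_i p_j)}$ yields a non-abelian group of order $n$, contradicting the uniqueness of $Z_n$.

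For the ``if'' direction, I would induct on the number $s$ of prime factors of $n$. By Lemma 1.1, $n = p_1 \cdots p_s$ is squarefree. The case $s=1$ is immediate. For the inductive step, let $G$ have order $n$. Proposition 1.3 says $G$ is solvable, and the counting argument of Remark 1.2 forces some Sylow $p_i$-subgroup $P$ to be normal in $G$. The quotient has order $m = n/p_i$, and $\gcd(m,\varphi(m))$ divides $\gcd(n,\varphi(n)) = 1$, so the induction hypothesis gives $G/P \cong Z_m$. Next, since $\gcd(|P|, |G/P|) = \gcd(p_i, m) = 1$, Schur--Zassenhaus produces a complement $K \cong Z_m$, exhibiting $G$ as a semi-direct product $P \rtimes_\phi K$ for some $\phi : K \to \mathrm{Aut}(P) \cong Z_{p_i - 1}$. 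The image of $\phi$ has order dividing both $m$ and $p_i - 1$, hence dividing $\gcd(n,\varphi(n)) = 1$; so $\phi$ is trivial, and by the remark following Definition 1.1 the product collapses to $G \cong Z_{p_i} \times Z_m \cong Z_n$, using $\gcd(p_i, m) = 1$ in the last step.

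The main obstacle is the semi-direct decomposition $G = P \rtimes K$ in the inductive step; once in hand, triviality of the action is a cheap consequence of the coprimality hypothesis. The cleanest route is to cite Schur--Zassenhaus, but if the paper prefers a self-contained argument one could instead observe that the short exact sequence $1 \to P \to G \to G/P \to 1$ with $P \cong Z_{p_i}$ abelian and $\gcd(|P|,|G/P|)=1$ is split by the standard averaging/cohomological argument (the vanishing of $H^2(G/P,P)$ in the coprime-order case). Either route reduces the theorem to the trivial-action computation above.
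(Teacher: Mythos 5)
Your proof is correct, and the forward implication matches the paper's argument almost exactly: squarefreeness from the count of abelian groups, then a non-abelian group $(Z_{p_j}\rtimes Z_{p_i})\times Z_{n/(p_ip_j)}$ whenever some $p_i\mid p_j-1$. The converse, however, is organized around a genuinely different decomposition. The paper produces a normal subgroup $H$ of order $p_1\cdots p_k$ (index a single prime $p_{k+1}$) by passing to the derived subgroup $G'$ of the solvable group $G$ and pulling back a suitable subgroup of the abelian quotient $G/G'$; the complement $K$ is then a Sylow $p_{k+1}$-subgroup, $G\cong H\rtimes K$ by coprimality of the orders, and the action is trivial because $p_{k+1}\nmid\varphi(n)$. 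You instead take the normal subgroup to be the prime-order Sylow subgroup $P$ supplied by the counting argument of the Remark, identify $G/P\cong Z_m$ by induction, and split the extension by Schur--Zassenhaus, after which triviality of $\phi:K\to \mathrm{Aut}(P)$ follows from $\gcd(m,p_i-1)\mid\gcd(n,\varphi(n))=1$. Your route buys a cleaner inductive structure (the paper's converse is split somewhat awkwardly into a computation with $Z_m\rtimes Z_{p_{k+1}}$ followed by a separate structural argument for an arbitrary $G$) at the cost of invoking Schur--Zassenhaus, which is heavier machinery than anything the paper uses; but, as you note, only the abelian-kernel case is needed, and even that can be avoided here: since $G/P$ is cyclic of order $m$ coprime to $p_i$, any preimage $g$ of a generator has order $m$ or $mp_i$, and in either case $\langle g\rangle$ contains a complement to $P$. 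Both arguments ultimately rest on the same two inputs, namely that a group of squarefree order has some normal Sylow subgroup (the paper's Remark, used there to prove solvability) and the induction hypothesis for the smaller order.
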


\begin{proof}
First assume that cyclic group $Z_n$ of order $n$, is the only group of order $n$.
Then by the Proposition 1.3, $n = p_1p_2\ldots p_r$ where $p_1, p_2,\ldots, p_r$ are distinct primes.
So $\varphi(n)=(p_1-1)(p_2-2)\ldots(p_r-1)$.
Suppose g.c.d. $(n,\varphi(n))\neq 1$. Then there must exist two primes $p_i$ and $p_j$ such that $p_i\mid p_j-1$, where $i\neq j$ for some $i, j  \in \{1, 2, \ldots,  r\}$.
Then $n = p_i p_j m$(say). Hence there exists a non-Abelian group $H$ of order $p_i p_j$.
Take another group $K = Z_m$. Then the semi-direct product of $H$ and $K$ is a non-Abelian group of order $n$, which contradicts our assumption that $Z_n$ is the only group
 of order $n$, so our supposition is wrong.
Therefore g.c.d. $(n,\varphi(n)) = 1$.

Conversely,  suppose  g.c.d. $(n,\varphi(n)) = 1$. Then by the Lemma 1.2, $n$ must be square free i.e. $n = p_1 p_2 \ldots p_r$ where $p_1,\ldots, p_r$ are distinct primes.
  We show that $Z_n $ is the only group of order $n$. Proof is by  induction on $r$.
If $r = 1$. Then $n = p_1$ i.e. $n$ is a prime. Since every  group of prime order is cyclic, $Z_n$ is the only group of order $n$. Thus result is true for $r = 1$.
Suppose by induction hypothesis that the result is true for $r = k$ i.e. for $n = p_1 p_2 \ldots p_k$, $Z_n $ is the only group of order $n$. Now to prove the result for
 $r = k+1$ i.e.  for $n = p_1 p_2\ldots p_{k+1}$,
Take $H = Z_m$,  where $m = p_1 p_2 \ldots p_k$ such that g.c.d. $(m,\varphi(m))= 1$  and $K = Z_{p_{k+1}}$. Consider the semi direct product of $H$ and $K$.
It exists, because semi direct product
 of any two groups exist.
Since $H$ is a cyclic group, so $o(Aut(H))=(p_1-1)\ldots (p_k-1)$.
Then any homomorphism from $K $ to $Aut(H)$ must be a trivial homomorphism, because if not, then g.c.d. $((p_1-1)(p_2-1) \ldots (p_k-1), p_{k+1})\neq 1$.
So g.c.d. $((p_1-1)(p_2-1) \ldots (p_k-1), p_{k+1})$ must be equal to $p_{k+1}$. Therefore $p_{k+1}\mid(p_1-1)(p_2-1)\ldots (p_k-1)$, so $p_{k+1}\mid(p_1-1)(p_2-1)\ldots (p_{k+1}-1)$
 i.e. $p_{k+1} $ divides $\varphi (n)$, which is not possible because g.c.d. $(n,\varphi(n)) = 1$. Thus homomorphism must be  trivial.
Therefore by the Remark 1.1, semi-direct product of $H$ and $K$ is the direct product of $H$  and $K$, which is a cyclic group of order $p_1 p_2\ldots p_{k+1}$,
for $p_1, p_2, \ldots, p_{k+1}$ are all distinct primes.
Since $H$ is the only group of order $m$,  $Z_{p_1 p_2 \ldots p_{k+1}}$ is the only group of order $n = p_1 p_2 \ldots p_{k+1}$.
Hence by induction hypothesis result is true for every $r$.

Now  let $G$ be any group of order $p_1 p_2 \ldots p_k p_{k+1}$ where $p_1, p_2, \ldots, p_k, p_{k+1}$ are all distinct primes. Then we want to prove the
existence of a normal subgroup $H$ of order $p_1p_2 \ldots p_k$ and a subgroup $K$ of order $p_{k+1}$. Since all $p_{i}$'s are primes, by Sylow theorem $G$ has
 a subgroups of order $p_{i}$, $\forall i$. Now we show the existence of $H$.

  By the Proposition 1.4,  $G$  is solvable, so  commutator $G'$ is either identity subgroup or a proper subgroup of $G$ (because if $G'= G$,
 then $G$ can not be solvable). If $G' = \{e\}$, where $e$ is the identity of $G$. Then $G$ must be an Abelian group.
 Therefore $G$ is  cyclic by the Fundamental theorem of Abelian groups. Suppose $G'$ is a proper subgroup of $G$. Then $o(G') = p_1p_2 \ldots p_i$,
where $1 \leq i < k+1$. So the factor group $G/{G'}$ is an Abelian group of order $p_{i+1}\ldots, p_{k+1}$. Therefore by Cauchy's theorem
 $G/{G'}$ has a normal subgroup $H/{G'}$ of order $p_{i+1}\ldots, p_{k}$. Hence $H$ is also a normal subgroup of $G$ and $o(H) = p_1 p_2 \ldots p_{k}$.
 This proves the existence of a normal subgroup $H$.  Now consider the semi direct product of $H$ and $K$, which is isomorphic to $G$ for $o(G) = o(H K)$.
 Hence the result is proved.
\end{proof}

\noindent \textbf{Acknowledgment} We sincerely thank Professor Alain Valette for his remarks and comments on the proof of the Theorem 1.5.

\end{document}